\newcommand\shortslash{\stretchrel*{$/$}{\textsc{e}}}
\begin{document}

\newtheorem{theorem}{Theorem}
\newtheorem{lemma}[theorem]{Lemma}
\newtheorem{claim}[theorem]{Claim}
\newtheorem{cor}[theorem]{Corollary}
\newtheorem{prop}[theorem]{Proposition}
\newtheorem{definition}[theorem]{Definition}
\newtheorem{question}[theorem]{Question}
\newtheorem{remark}[theorem]{Remark}
\newcommand{\hh}{{{\mathrm h}}}

\numberwithin{equation}{section}
\numberwithin{theorem}{section}
%\numberwithin{definition}{section}

\def\sssum{\mathop{\sum\!\sum\!\sum}}
\def\ssum{\mathop{\sum\ldots \sum}}

\def \balpha{\boldsymbol\alpha}
\def \bbeta{\boldsymbol\beta}
\def \bgamma{{\boldsymbol\gamma}}
\def \bomega{\boldsymbol\omega}

\def\sssum{\mathop{\sum\!\sum\!\sum}}
\def\ssum{\mathop{\sum\ldots \sum}}
\def\dsum{\mathop{\sum\  \sum}}
\def\iint{\mathop{\int\ldots \int}}

\def\squareforqed{\hbox{\rlap{$\sqcap$}$\sqcup$}}
\def\qed{\ifmmode\squareforqed\else{\unskip\nobreak\hfil
\penalty50\hskip1em\null\nobreak\hfil\squareforqed
\parfillskip=0pt\finalhyphendemerits=0\endgraf}\fi}%%

%  use the AMS-Euler Fraktur fonts
%%%%%%%%%%%%%%%%%%%%%%%%%%%%%%%%%%
\newfont{\teneufm}{eufm10}
\newfont{\seveneufm}{eufm7}
\newfont{\fiveeufm}{eufm5}
%%%%%%%%%%%%%%%%%%%%%%%%%%%%%%%%%
%
%  allow automatic size selection in math mode
%
%%%%%%%%%%%%%%%%%%%%%%%%%%%%%%%%%
\newfam\eufmfam
     \textfont\eufmfam=\teneufm
\scriptfont\eufmfam=\seveneufm
     \scriptscriptfont\eufmfam=\fiveeufm
%%%%%%%%%%%%%%%%%%%%%%%%%%%%%%%%%
%
%  \frak works on a single symbol at a time...
%
\def\frak#1{{\fam\eufmfam\relax#1}}

\def\fK{\mathfrak K}
\def\fT{\mathfrak{T}}

\def\ges{\gtrsim}
\def\les{\lesssim}
\def\fA{{\mathfrak A}}
\def\fB{{\mathfrak B}}
\def\fC{{\mathfrak C}}
\def\fD{{\mathfrak D}}

\newcommand{\sX}{\ensuremath{\mathscr{X}}}

\def\vec#1{\mathbf{#1}}
\def\dist{\mathrm{dist}}
\def\vol#1{\mathrm{vol}\,{#1}}

\def\squareforqed{\hbox{\rlap{$\sqcap$}$\sqcup$}}
\def\qed{\ifmmode\squareforqed\else{\unskip\nobreak\hfil
\penalty50\hskip1em\null\nobreak\hfil\squareforqed
\parfillskip=0pt\finalhyphendemerits=0\endgraf}\fi}

\def\sA{\mathscr A}
\def\sB{\mathscr B}
\def\sC{\mathscr C}
\def\sD{\Delta}
\def\sE{\mathscr E}
\def\sF{\mathscr F}
\def\sG{\mathscr G}
\def\sH{\mathscr H}
\def\sI{\mathscr I}
\def\sJ{\mathscr J}
\def\sK{\mathscr K}
\def\sL{\mathscr L}
\def\sM{\mathscr M}
\def\sN{\mathscr N}
\def\sO{\mathscr O}
\def\sP{\mathscr P}
\def\sQ{\mathscr Q}
\def\sR{\mathscr R}
\def\sS{\mathscr S}
\def\sU{\mathscr U}
\def\sT{\mathscr T}
\def\sV{\mathscr V}
\def\sW{\mathscr W}
\def\sX{\mathscr X}
\def\sY{\mathscr Y}
\def\sZ{\mathscr Z}

%%%%%%%%%%%%%%%%%%%%%%%%%
% Alphabet calligraphie %
%%%%%%%%%%%%%%%%%%%%%%%%%
\def\cA{{\mathcal A}}
\def\cB{{\mathcal B}}
\def\cC{{\mathcal C}}
\def\cD{{\mathcal D}}
\def\cE{{\mathcal E}}
\def\cF{{\mathcal F}}
\def\cG{{\mathcal G}}
\def\cH{{\mathcal H}}
\def\cI{{\mathcal I}}
\def\cJ{{\mathcal J}}
\def\cK{{\mathcal K}}
\def\cL{{\mathcal L}}
\def\cM{{\mathcal M}}
\def\cN{{\mathcal N}}
\def\cO{{\mathcal O}}
\def\cP{{\mathcal P}}
\def\cQ{{\mathcal Q}}
\def\cR{{\mathcal R}}
\def\cS{{\mathcal S}}
\def\cT{{\mathcal T}}
\def\cU{{\mathcal U}}
\def\cV{{\mathcal V}}
\def\cW{{\mathcal W}}
\def\cX{{\mathcal X}}
\def\cY{{\mathcal Y}}
\def\cZ{{\mathcal Z}}
\newcommand{\rmod}[1]{\: \mbox{mod} \: #1}

\def\vr{\mathbf r}

\def\e{{\mathbf{\,e}}}
\def\ep{{\mathbf{\,e}}_p}
\def\em{{\mathbf{\,e}}_m}
\def\en{{\mathbf{\,e}}_n}

\def\Tr{{\mathrm{Tr}}}
\def\Nm{{\mathrm{Nm}}}

 \def\SS{{\mathbf{S}}}

\def\lcm{{\mathrm{lcm}}}

\def\({\left(}
\def\){\right)}
\def\fl#1{\left\lfloor#1\right\rfloor}
\def\rf#1{\left\lceil#1\right\rceil}

\def\mand{\qquad \mbox{and} \qquad}
\hypersetup{breaklinks=true}

\newcommand{\commGi}[2][]{\todo[#1,color=yellow]{Gi: #2}}

\newcommand{\commIg}[2][]{\todo[#1,color=magenta]{Ig: #2}}

\newcommand{\commIl}[2][]{\todo[#1,color=blue]{Il: #2}}

\newcommand{\commSi}[2][]{\todo[#1,color=green]{Si: #2}}

%%%%%%%%%%%%%%%%%%%%%%%%%%%%%%%%%%%%%%%%%%%%%%%%%%%%%%%%
%%%%%%%%%%%%%%%%%%%%%%%%%%%%%%%%%%%%%%%%%%%%%%%%%%%%%%%%
%%%%%%%%%%%%%%%%%%%%%%%%%%%%%%%%%%%%%%%%%%%%%%%%%%%%%%%%
%%%%%%%%%%%%%%%%%%%%%%%%%%%%%%%%%%%%%%%%%%%%%%%%%%%%%%%%

%%%%%%%  END OF STANDARD STUFF %%%%%%%%%

%%%%%%%%%%%%%%%%%%%%%%%%%%%%%%%%%%%%%%%%%%%%%%%%%%%%%%%%
%%%%%%%%%%%%%%%%%%%%%%%%%%%%%%%%%%%%%%%%%%%%%%%%%%%%%%%%
%%%%%%%%%%%%%%%%%%%%%%%%%%%%%%%%%%%%%%%%%%%%%%%%%%%%%%%%
%%%%%%%%%%%%%%%%%%%%%%%%%%%%%%%%%%%%%%%%%%%%%%%%%%%%%%%
%%%%%%%%%%%
%%% Spell

\hyphenation{re-pub-lished}

\parskip 4pt plus 2pt minus 2pt
%% \parskip= 2 pt plus 3pt

%% \parindent 0 pt

%\mathsurround=1pt

\def\bfdefault{b}
\overfullrule=5pt

\def \F{{\mathbb F}}
\def \K{{\mathbb K}}
\def \Z{{\mathbb Z}}
\def \Q{{\mathbb Q}}
\def \R{{\mathbb R}}
\def \C{{\\mathbb C}}
\def\Fp{\F_p}
\def \fp{\Fp^*}

\title[Trilinear Exponential Sums]{Bounds of Trilinear and Trinomial Exponential Sums}

  \author[S.  Macourt] {Simon Macourt}
\address{Department of Pure Mathematics, University of New South Wales,
Sydney, NSW 2052, Australia}
\email{s.macourt@unsw.edu.au}

 \author[G. Petridis] {Giorgis Petridis}
\address{Department of Mathematics, University of Georgia, 
Athens, GA 30602, USA}
\email{giorgis.petridis@gmail.com}

 \author[I. D. Shkredov]{Ilya D. Shkredov}
\address{Steklov Mathematical Institute of Russian Academy
of Sciences, ul. Gubkina 8, Moscow, Russia, 119991, Institute for Information Transmission Problems  of Russian Academy
of Sciences, Bolshoy Ka\-ret\-ny Per. 19, Moscow, Russia, 127994, and MIPT, 
Institutskii per. 9, Dolgoprudnii, Russia, 14170}
\email{ilya.shkredov@gmail.com}

\author[I. E. Shparlinski]{Igor E. Shparlinski} 
\address{School of Mathematics and Statistics, University of New South Wales, 
Sydney, NSW 2052, Australia}
\email{igor.shparlinski@unsw.edu.au}

\begin{abstract} 
We prove, for a sufficiently small, subset $\cA$ of a prime residue field an estimate on the number of solutions to the equation $(a_1-a_2)(a_3-a_4) = (a_5-a_6)(a_7-a_8)$ with all variables in $\cA$. We then derive new bounds on trilinear exponential sums and on the total number of residues equaling the product of two differences of elements of $\cA$. We also prove a refined estimate on the number of collinear triples in a Cartesian product of multiplicative subgroups and derive stronger bounds for trilinear sums with all variables in multiplicative subgroups.
\end{abstract}

\keywords{trilinear  exponential sums, additive combinatorics, eigenvalue method}
\subjclass[2010]{11B30, 11L07, 11T23}

\maketitle

\section{Introduction}

Let $p$ be a prime and let $\F_p$ be the finite field of $p$ elements. 
Now given three sets $\cX, \cY, \cZ \subseteq \F_p$, and three sequences of  complex weights
 $\alpha= (\alpha_{x})_{x\in \cX}$, $\beta = \( \beta_{y}\)_{y \in \cY}$ and $\gamma =    \(\gamma_{z}\)_{z \in \cZ}$
 supported on
$\cX$,  $\cY$ and 
$\cZ$, respectively, we consider exponential sums
\begin{equation}
\label{eq:SXYZ}
S(\cX, \cY, \cZ;\alpha, \beta, \gamma) = \sum_{x \in\cX} \sum_{y \in \cY}
 \sum_{z\in \cZ}\alpha_{x} \beta_{y} \gamma_{z}\ep(xyz) \,,  
\end{equation}
where $\ep(z) = \exp(2 \pi i z/p)$, the  sets $\cX, \cY, \cZ \subseteq \F_p$ are of cardinalities 
\begin{equation}
\label{eq:sets}
|\cX| = X, \qquad  |\cY| = Y, \qquad |\cZ| = Z \,,
\end{equation}
and  weights satisfy 
\begin{equation}
\label{eq:weights1}
\max_{x \in \cX} |\alpha_{x}| \le 1, \qquad 
\max_{y \in   \cY}  |\beta_{y}| \le 1, \qquad 
\max_{z \in   \cZ}   |\gamma_{z}| \le 1 \,.
\end{equation}

We also define more general sums
\begin{align}\label{eq:TXYZ}
T(\cX, \cY, \cZ ;  \rho, \sigma, \tau) =
\sum_{x \in\cX} \sum_{y \in \cY}
 \sum_{z\in \cZ} \rho_{x,y} \sigma_{x,z} \tau_{y,z}\ep(xyz)
 \end{align}
with some  weights 
$\rho= (\rho_{x,y})$,  $\sigma = (\sigma_{x,z})$ and $\tau=(\tau_{y,z})$
satisfying 
\begin{equation}
\label{eq:weights2}
\max_{(x,y) \in \cX \times \cY} |\rho_{x,y}| \le 1, \quad 
\max_{(x,z) \in \cX \times \cZ} |\sigma_{x,z}| \le 1, \quad 
\max_{(y,z) \in \cY \times \cZ} |\tau_{y,z}| \le 1 \,.  
\end{equation}

Investigation of the sums $S(\cX, \cY, \cZ;\alpha, \beta, \gamma)$ has been initiated by 
Bourgain and Garaev~\cite{BouGar} who have linked them to some problems of 
additive combinatorics and  also have given nontrivial  explicit  bounds on 
these sums which go beyond the classical bilinear bound. 
Several more bounds on these sums, improving and complementing those  
of~\cite{BouGar} can be found in~\cite{Gar2,PetShp,Shkr2}. 

The more general sums $T(\cX, \cY, \cZ ;  \rho, \sigma, \tau)$ have been introduced and estimated 
in~\cite{PetShp}.  Several improvements of the results of~\cite{PetShp} can be found in~\cite{Mac1,Shkr2}. 
Furthermore,  in the special case when the sets $\cX$,  $\cY$ and  $\cZ$ are  multiplicative subgroups on $\F_p^*$, 
bounds of these sums have found applications to new estimates of exponential sums with 
sparse polynomials~\cite{Mac2,MSS}. 

We note that generally speaking the bound on  bilinear sums does not apply to these sums. 
Instead, in~\cite{PetShp} it was shown that they are related to the following combinatorial quantity.  
For any $\cA \subseteq \F_p$, of cardinality $A$, we define
\[
D^\times(\cA) = |\{ (a_1-a_2)(a_3-a_4) = (a_5-a_6)(a_7-a_8):~ a_1\, \dots, a_8 \in \cA \}| \,. 
\]

It follows from~\cite{AYMRS} that $D^\times(\cA) = O(A^{13/2})$  for $A \leq p^{2/3}$, see~\cite[Corollary 2.9]{PetShp}. 
It was shown in~\cite[Theorem~1.3]{PetShp} that under the conditions~\eqref{eq:sets} 
and~\eqref{eq:weights2} we have
\begin{equation}
\label{eq:P-S}
T(\cX, \cY, \cZ ;  \rho, \sigma, \tau)  = O\( p^{1/8} X^{7/8} (YZ)^{29/32}\) \,, 
\end{equation}
provided that $p^{2/3}\ge X \ge Y \ge Z$. 
In some ranges this bound has been improved in~\cite[Theorem~1.2]{Mac1} as
\[
T(\cX, \cY, \cZ ;  \rho, \sigma, \tau)  = O\( p^{3/16} X^{13/16} (YZ)^{7/8}\) \, , 
\]
provided that $X \ge  Y \ge Z$.

If  $X \ge  Y \ge Z$ and also $Y \le p^{48/97}$ then~\eqref{eq:P-S}
has been further improved  in~\cite[Corollary~45]{Shkr2} as 
\begin{equation}
\label{eq:Shkr}
T(\cX, \cY, \cZ ;  \rho, \sigma, \tau)  = O\( p^{1/8} X^{7/8} (YZ)^{29/32-1/3072}\) \, .
\end{equation}
This progress was based on the stronger bound $D^\times(\cA) = O(A^{13/2 - 1/192})$  for $A \leq p^{48/97}$, 
see~\cite[Theorem~41]{Shkr2}.

Multilinear generalisations of the exponential sums in~\eqref{eq:SXYZ} and~\eqref{eq:TXYZ} 
have also been studied, see~\cite{Bou1, Gar2, KM,Mac1,PetShp,Shkr2} and references therein.
Although all the above  works  rely on ideas and results from additive combinatorics and thus work the 
best in prime fields $\F_p$, extension to arbitrary finite fields  can be found in~\cite{Bou2,BouGlib,Moh,Ost}.

Here, as in all previous works, we improve the state-of-the-art on $D^\times(\cA)$ for sufficiently small $A$, see Theorem~\ref{thm:D_improved_new} below, 
and then use some previously known estimates and obtain new concrete bounds on the exponential sums in~\eqref{eq:SXYZ} and~\eqref{eq:TXYZ}, 
see  Theorem~\ref{thm:STsums} below. This progress is rooted at the eigenvalue method of Shkredov.

For the special case of multiplicative subgroups, we achieve further progress by an altogether different method: by improving existing bounds on the number of collinear triples of Cartesian products of multiplicative subgroups, see Theorems~\ref{thm:subgroups} 
and~\ref{thm:Tgroup} below. Our results lead to small improvements on bounds on trinomial exponential sums, see~\cite{MSS} and Corollary~\ref{cor:Bound3}.

We also bound from below the cardinality of the set
\[
(\cA-\cA) (\cA-\cA) = \{ (a_1-a_2)(a_3-a_4) :~ a_1\, \dots, a_4 \in \cA \} 
\,,
\]
see Theorem~\ref{thm:ProdDiff} below, improving~\cite[Theorem~27]{MPR-NRS}.

\section{Notation}

In what follows, it  is convenient to introduce notation $A\les B$ and $B\ges A$ as equivalents
of $A \le p^{o(1)} B$ as $p\to \infty$; and $A \sim B$ as equivalent to $A \les B$ and $B \les A$. 

We also recall that  the notations $A=O(B)$, $A\ll B$ and $B \gg A$ are each equivalent to the
statement that the inequality $A\le c\,B$ holds with a
constant $c>0$ which is absolute throughout this paper.

\section{Some combinatorial quantities}

Given two sets $\cU, \cV \subseteq \F_p$ we define

\begin{itemize}
\item $E^{\ast} (\cU, \cV)$ where $\ast  \in \{+, -, \times, \shortslash \}$ is one of the four arithmetic operations  
as the number of solutions to the equation  
\[
u_1\ast v_1 =u_2 \ast v_2  , \quad  \quad  u_1, u_2   \in \cU, \  v_1, v_2   \in \cV \,; 
 \]
\item $E^{\ast}_3 (\cU, \cV)$ where  $\ast  \in \{+, -, \times, \shortslash \}$ as the number of solutions to the equation  
\[
u_1\ast v_1 =u_2 \ast v_2 =u_3 \ast v_3  , \quad  \quad  u_1, u_2,u_3   \in \cU, \  v_1, v_2, v_3   \in \cV \,; 
 \]
\item  $D^\times (\cU, \cV)$ as the number of solutions to the equation  
\begin{align*}
(u_1-v_1)& (u_2 - v_2)   =  (u_3-v_3)(u_4-v_4), \\  u_i & \in \cU, \  v_i  \in \cV, \quad i=1,2,3,4 \,;
\end{align*}
 \item  $\widetilde D^\times (\cU, \cV)$ as the number of solutions to the equation  
\begin{align*}
(u_1-u_2)(v_1 - v_2) &=  (u_3-u_4)(v_3-v_4) \ne 0, \\ \ u_i \in \cU,  \  v_i & \in \cV,\quad   i=1,2,3,4 \,; 
\end{align*}
\item $\widetilde T(\cU,\cV)$ as the number of collinear triples in $\cU \times \cV$ with slopes in $\F_p^*$, that is,  the number of solutions to the equation
\begin{align*}
(u_1-u_2)& (v_1-v_2)=(u_1-u_3)(v_1-v_3)\neq 0, \\ u_i & \in \cU, \ v_i \in \cV, \ i=1,2,3 \,;
\end{align*}  
\item  $N(\cU, \cV,\cW)$ as the number of solutions to 
\[
u_1(v_1-w_1) = u_2 (v_2-w_2), \qquad u_1, u_2 \in \cU, \ v_1,v_2 \in \cV, \ w_1,w_2 \in \cW \, . 
\]  
\end{itemize}

In the case of equal sets $\cU =\cV$, we write
\[
  E^{\ast} (\cU, \cU) = E^{\ast} (\cU)  \text{ and }  E^{\ast}_3 (\cU, \cU) = E^{\ast}_3 (\cU) \,.
\] 

Clearly 
\[
 \widetilde D^\times (\cU) \le D^\times (\cU) \le  \widetilde  D^\times (\cU)  + 4 |\cU|^6 \,. 
 \]

Note that expressing $ N(\cU, \cV, \cW) $ and $D^\times (\cU, \cV)$ via multiplicative character sums, by the Cauchy inequality we immediately derive
\begin{equation}
\label{eq:N ED}
 N(\cU, \cV, \cW)  \le \sqrt{E^{\times}\(\cU\) D^\times(\cV, \cW) } \,.
 \end{equation}

We also define these quantities with functions instead of sets. For example, for a function $F: \F_p \to \R$ and   $\ast  \in \{+, -, \times, \shortslash \}$  we define
\[
E^\ast_3(F) = \sum_{\substack{x_1, y_1, x_2 ,  y_2 , x_3 , y_3 \in \F_p \\ x_1\ast y_1 = x_2 \ast y_2  = x_3 \ast y_3}}
F(x_1) F(y_1) F(x_2) F(y_2) F(x_3) F(y_3)\,, 
\]
and also given a set $\cU \subseteq \F_p$ we define
\[
E^\ast(F, \cU) = \sum_{\substack{x_1, x_2 \in \F_p, \ u_1.u_2 \in\cU \\ x_1\ast u_1 = x_2 \ast u_2}} F(x_1)  F(x_2) \,.
\]

\section{A new bound on $D^\times(\cA)$}
\label{sec:new D}

We use results based on 
%%the  third--listed author's eigen\-value me\-thod~\cite{Shkr2}
the eigen\-value me\-thod of Shkredov~\cite{Shkr2}
 to prove a new upper bound on $D^\times(\cA)$. Throughout the proof we use the convention that sets are written in curly script capital letters and their cardinality in roman script capital letters, 
for example, $|\cA| = A$. 

We start with a result which can be of independent interest. 

\begin{lemma}
\label{lem:gen_eigenvalues} 	
	Let $F: \F_p \to \R$ be a non--negative function and let $K\ge 1$  be an arbitrary parameter. %%  = K(F)$ a parameter depending only on $F$.
	Suppose that for   $\ast  \in \{+, -, \times, \shortslash \}$  and any set $\cS \subseteq \F_p$	one has 
\begin{equation}\label{cond:energy_gen_eigenvalues}	
	E^\ast (F,\cS) \le K S^{3/2} \,.
\end{equation}
	Then  
\[
	E^\ast(F) \les \(E^\ast _3\)^{6/13} (F) K^{2/13} \| F\|_1^{12/13} \,,
\]
where 
\[ \| F\|_1 = \sum_{x \in \F_p} F(x)\,.
\]
\end{lemma}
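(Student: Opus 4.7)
The plan is to reduce the claim to an auxiliary cube inequality derived from the hypothesis, which is then interpolated against an elementary Cauchy--Schwarz bound. Setting $r(t) := r_F^\ast(t) = \sum_{x \ast y = t} F(x) F(y)$, one has $\sum_t r(t) = \|F\|_1^2$, $\sum_t r(t)^2 = E^\ast(F)$, and $\sum_t r(t)^3 = E_3^\ast(F)$. Applying the Cauchy--Schwarz inequality in the form $(\sum r^2)^2 \le (\sum r)(\sum r^3)$ gives the baseline estimate $E^\ast(F)^2 \le \|F\|_1^2 \cdot E_3^\ast(F)$.

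The heart of the argument is the auxiliary inequality
\[
E^\ast(F)^3 \les K^2 \cdot E_3^\ast(F) \cdot \|F\|_1^2,
\]
call it $(\star)$. Once $(\star)$ is in hand, raising the Cauchy--Schwarz baseline to the fifth power and multiplying by $(\star)$ yields $E^\ast(F)^{13} \les \|F\|_1^{12} \cdot E_3^\ast(F)^6 \cdot K^2$, whose thirteenth root is precisely the stated bound. To obtain $(\star)$, I would dyadically decompose the representation function: with $\cL_\Delta = \{t : \Delta \le r(t) < 2\Delta\}$ and $N_\Delta = |\cL_\Delta|$, pigeonholing selects a dominant level $\Delta$ with $E^\ast(F) \les \Delta^2 N_\Delta$, and the free estimates $\Delta N_\Delta \le \|F\|_1^2$ and $\Delta^3 N_\Delta \le E_3^\ast(F)$ are available. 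Applying the hypothesis with $\cS = \cL_\Delta$ gives $E^\ast(F, \cL_\Delta) \le K N_\Delta^{3/2}$, and the remaining task is a matching \emph{lower} bound on $E^\ast(F, \cL_\Delta)$ in terms of $\Delta$, $N_\Delta$, $E_3^\ast(F)$, and $\|F\|_1$. Following Shkredov's eigenvalue method, this is achieved by viewing $E^\ast(F, \cL_\Delta)$ as a bilinear form of $1_{\cL_\Delta}$ against a symmetric operator built from $F$ and $\ast$ whose low-order Schatten norms encode the energies of $F$, and then comparing spectral moments via Cauchy--Schwarz.

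The main obstacle is this spectral lower bound for $E^\ast(F, \cL_\Delta)$. The exponent $3/2$ in the hypothesis $E^\ast(F, \cS) \le K S^{3/2}$ is precisely the one for which the corresponding spectral estimate produces $K^2$ and $E_3^\ast(F)$ with the correct weights in $(\star)$; any weakening of the exponent would destroy the interpolation. The denominator $13 = 5 \cdot 2 + 3$ in the final bound is a direct imprint of the weights $(5,1)$ used to combine the Cauchy--Schwarz baseline and $(\star)$.
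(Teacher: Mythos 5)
Your reduction step is fine as arithmetic: Cauchy--Schwarz in the form $E^\ast(F)^2 \le \|F\|_1^2 E^\ast_3(F)$ is correct, and if your auxiliary inequality $(\star)$, namely $E^\ast(F)^3 \les K^2 E^\ast_3(F)\|F\|_1^2$, were available, then taking the product of the fifth power of Cauchy--Schwarz with $(\star)$ would indeed give the stated bound with exponent $13$. The genuine gap is that $(\star)$ is never proved, and it is not a routine consequence of ``Shkredov's eigenvalue method'': it is a strictly stronger statement than the lemma itself (the lemma is exactly the weighted geometric mean of $(\star)$ with Cauchy--Schwarz, and does not imply $(\star)$ back). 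Concretely, the missing step in your sketch is a lower bound of the shape $E^\ast(F,\cL_\Delta)\ges \Delta^{3/2}N_\Delta^{5/2}/\|F\|_1$ for the level set $\cL_\Delta$ of the representation function; the elementary Cauchy--Schwarz lower bound one can actually prove, $E^\ast(F,\cL_\Delta)\ge \Delta^2 N_\Delta^2/\|F\|_2^2$, is not of this strength, and no spectral argument delivering the required bound is exhibited. A quick sanity check shows how much stronger $(\star)$ is than the available technology: in the paper's application ($F=r_{\cA-\cA}$, $E^\times_3\les A^9$, $K=A^3$, $\|F\|_1=A^2$) your $(\star)$ would yield $D^\times(\cA)\les A^{19/3}$, noticeably better than the $A^{84/13}$ of Theorem~\ref{thm:D_improved_new}, which is itself the new state of the art; so either $(\star)$ is false in general or it is well beyond what the cited spectral machinery gives. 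Either way, as written the proposal rests on an unproved (and quite possibly unprovable) key inequality.

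For comparison, the paper proves the lemma without any new spectral input: it dyadically pigeonholes on the level sets of $F$ \emph{itself} (not of the representation function), passing to $\cB=\{x:\Delta<F(x)\le 2\Delta\}$ with $E^\ast(F)\sim \Delta^4E^\ast(\cB)$, transfers the hypotheses via $B\Delta\le\|F\|_1$, $E^\ast_3(\cB)\le\Delta^{-6}E^\ast_3(F)$ and $\Delta^2E^\ast(\cB,\cS)\le KS^{3/2}$, and then applies the known set version of the eigenvalue bound, \cite[Theorem~34]{MPR-NRS}, as a black box with $D_1=B^{-3}\Delta^{-6}E^\ast_3(F)$ and $D_2=\Delta^{-2}B^{-1}K$. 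That theorem produces the $13$-exponent combination directly, which is precisely why the paper never needs, and never claims, anything as strong as your $(\star)$. If you want to salvage your route, you would have to either prove $(\star)$ (which would be a result of independent interest, stronger than the paper's main bound on $D^\times(\cA)$) or abandon it and, like the paper, decompose $F$ into level sets and quote the weighted-to-set reduction plus the existing set-level theorem.
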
  

\begin{proof} By the Dirichlet principle, there exists 
some $\Delta> 0 $  such that 
\[
E^\ast(F) \sim  \Delta^4 E^\ast(\cB)\,,
\] 
where $\cB = \{ x:~\Delta < F(x) \le 2\Delta \}$ is the dyadic level set of the function $F$. 
	Then 
\[
B \Delta \le \|F\|_1 \mand E^\ast_3 (\cB) \le \Delta^{-6} E^\ast_3 (F)
\] and for any $\cS \subseteq \F_p$, by our assumption one has 
	\[
		\Delta^2 E^\ast(\cB,\cS) \le E^\ast(F,\cS) \le K S^{3/2} \,.
	\]
	Applying~\cite[Theorem~34]{MPR-NRS} (which holds for any type of energy) with 
	\[D_1 = B^{-3} \Delta^{-6} E^\ast_3 (F) \mand D_2 = \Delta^{-2} B^{-1} K\,,
	\] 
	we obtain 
\begin{align*}
	\(E^\ast(\cB)\)^{13} & \les D^{6}_1 D^{2}_2 B^{32} \ll \(B^{-3} \Delta^{-6} E^\ast_3 (F)\)^6 (\Delta^{-2} B^{-1} K)^2 B^{32} \\
	& = \(E^\ast_3 (F)\)^6 K^2 B^{12} \Delta^{-40} \le \(E^\ast _3 (F)\)^6 K^2 \| F\|_1^{12} \Delta^{-52} \,.
\end{align*}
In other words,
\[
	\(E^\ast (F)\)^{13} \sim\(E^\ast (\cB)\)^{13} \Delta^{52} \les \(E^\ast_3 (F)\)^6 K^2 \| F\|_1^{12} 
\]
	as required. 
\end{proof}

In the proof of Theorem~\ref{thm:D_improved_new}  below we apply Lemma~\ref{lem:gen_eigenvalues}  to the function 
\[
 r_{\cA-\cA} (x) = |\{ (a,b) \in \cA \times \cA :~ a-b =x \}|
\]
and to multiplicative energy.
\begin{lemma}
		\label{lem:next}
	Let  $\cA \subseteq \F_p$  be of cardinality $A \le p^{2/3}$. Then 
\[
	E^{\shortslash}_3 \(r_{\cA-\cA}\) \ll  A^9 \log A \,,
\]  
	and for any set $\cS \subseteq \F_p$ with $S \le A^2$ one has 
\[
	E^{\shortslash} \(r_{\cA-\cA}, \cS\) \ll \frac{A^4 S^2}{p} + A^3 S^{3/2}  \,.
\]
\end{lemma}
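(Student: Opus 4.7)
The plan is to reduce each of the two estimates to a weighted-pair or collinear-triple count in $\cA \times \cA \subset \F_p^2$; the hypothesis $A \le p^{2/3}$ places us in the Stevens--de Zeeuw range $|\cA \times \cA| \le p^{4/3}$, in which the total number of collinear triples in $\cA \times \cA$ is $\ll A^4 \log A$.

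For the first inequality, expand
\[
E^{\shortslash}_3(r_{\cA-\cA}) = \sum_\lambda g(\lambda)^3, \qquad g(\lambda) = \sum_{y \neq 0} r_{\cA-\cA}(y)\, r_{\cA-\cA}(\lambda y),
\]
and identify $g(\lambda)$ with the number of ordered pairs of distinct points of $\cA \times \cA$ with distinct $x$-coordinates lying on a line of slope $\lambda$. Writing $\mu(L) = |L \cap (\cA \times \cA)|$ for such a line $L$, the lines of any fixed slope $\lambda$ partition $\cA \times \cA$, so $\sum_{L :\, s(L) = \lambda}\mu(L) = A^2$ and $g(\lambda) = \sum_{L :\, s(L) = \lambda}\mu(L)(\mu(L) - 1)$. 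The decisive step is the Cauchy--Schwarz identity
\[
\Bigl(\sum_{L :\, s(L) = \lambda} \mu(L)^2\Bigr)^2 \le \Bigl(\sum_{L :\, s(L) = \lambda} \mu(L)\Bigr)\Bigl(\sum_{L :\, s(L) = \lambda} \mu(L)^3\Bigr) = A^2 \sum_{L :\, s(L) = \lambda} \mu(L)^3,
\]
which combined with the trivial $g(\lambda) \le E^+(\cA) \le A^3$ yields $g(\lambda)^3 \le A^5 \sum_{L :\, s(L) = \lambda}\mu(L)^3$. Summing over all slopes replaces the inner sum by $\sum_L \mu(L)^3$ over all lines in $\F_p^2$, and the Stevens--de Zeeuw bound $\sum_L \mu(L)^3 \ll A^4 \log A$ finishes (1) with $E^{\shortslash}_3(r_{\cA-\cA}) \ll A^9 \log A$.

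For the second inequality, the parallel identity
\[
E^{\shortslash}(r_{\cA-\cA}, \cS) = S^2 A^2 + \sum_\xi M(\xi)\, g(\xi),
\]
where $M(\xi) = |\{(u_1, u_2) \in \cS^2 : u_2 = \xi u_1\}|$ obeys $\sum_\xi M = S^2$ and $\sum_\xi M^2 = E^\times(\cS) \le S^3$, reduces matters to controlling $\sum_\xi M(\xi)g(\xi)$. Decomposing $g(\xi) = A^4/p + \tilde g(\xi)$ extracts the main term $A^4 S^2/p$ from the constant part, and the fluctuation is handled by Cauchy--Schwarz,
\[
\Bigl|\sum_\xi M(\xi)\tilde g(\xi)\Bigr| \le \sqrt{E^\times(\cS)}\,\sqrt{\widetilde D^\times(\cA) - A^8/p},
\]
together with the AYMRS bound $\widetilde D^\times(\cA) = \sum_\xi g(\xi)^2 \ll A^{13/2}$. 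The diagonal contribution $S^2 A^2$ is absorbed into $A^3 S^{3/2}$ since $S \le A^2$.

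The decisive technical step in (1) is the cubic Cauchy--Schwarz manipulation that reduces $g(\lambda)^3$ to the third moment of line multiplicities, whereupon Stevens--de Zeeuw enters as a black box. The delicate point in (2) is matching the exponent $A^3 S^{3/2}$ precisely: a straight Cauchy--Schwarz with the AYMRS estimate produces $S^{3/2} A^{13/4}$, so closing the residual $A^{1/4}$ gap likely calls for either a H\"older-type interpolation that also exploits $\sum_\lambda g(\lambda)^3 \ll A^9 \log A$ from the first part, or a direct incidence estimate for pairs of weighted points in $\cS \times (\cA - \cA)$ lying on a common line through the origin.
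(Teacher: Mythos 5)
There are genuine gaps in both halves. For the first inequality, your reduction $g(\lambda)^3 \le A^5\sum_{L:\,s(L)=\lambda}\mu(L)^3$ is correct, but the bound you then invoke --- that the total number of collinear triples of $\cA\times\cA$ is $\ll A^4\log A$ whenever $A^2\le p^{4/3}$ --- is false: the sharp estimate has the shape $A^6/p + A^4\log A$, and the main term $A^6/p$ is genuinely present (a typical line meets $\cA\times\cA$ in about $A^2/p$ points, so even a random set of $A^2$ points has about $A^6/p$ collinear triples). Your argument therefore yields only $E^{\shortslash}_3(r_{\cA-\cA}) \ll A^{11}/p + A^9\log A$, which proves the claimed bound for $A\lesssim (p\log A)^{1/2}$ but not in the stated range $A\le p^{2/3}$; the loss comes from replacing $g(\lambda)$ by its worst-case value $A^3$ when typically $g(\lambda)\approx A^4/p$. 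The paper avoids this by writing $g(z)=\sum_{a,b\in\cA}r_{(\cA-a)/(\cA-b)}(z)$ and applying H\"older over the $A^2$ pairs $(a,b)$, which reduces the third moment of $g$ to $A^4$ times the number of collinear \emph{quadruples} $Q(\cA)\ll A^8/p^2+A^5\log A$ (\cite[Theorem~11(2)]{MPR-NRS}); since $A^8/p^2\le A^5$ precisely when $A\le p^{2/3}$, the fourth-moment route survives the whole range, whereas your third-moment route does not.

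For the second inequality you concede the gap yourself: Cauchy--Schwarz together with the bound $D^\times(\cA)\ll A^{13/2}$ of~\cite{AYMRS} gives $\sqrt{E^\times(\cS)}\sqrt{D^\times(\cA)}\ll S^{3/2}A^{13/4}$, a factor $A^{1/4}$ short of the target $A^3S^{3/2}$, and neither of your proposed repairs is carried out (a H\"older interpolation against part (1) gives roughly $S^{5/3}A^3$, still short). The paper's proof of this part is a direct application of the modern form of Rudnev's point--plane incidence theorem (see \cite[Theorem~10]{Shkr2}): $E^{\shortslash}(r_{\cA-\cA},\cS)$ counts solutions of $(a_1-a_2)s_2=(a_3-a_4)s_1$ with $a_i\in\cA$, $s_j\in\cS$, which is an incidence count between roughly $A^2S$ points and $A^2S$ planes in $\F_p^3$ and hence is $\ll A^4S^2/p+(A^2S)^{3/2}=A^4S^2/p+A^3S^{3/2}$. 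So to complete your write-up you need Rudnev's theorem (or an equivalent incidence input) for part (2), and the quadruple-count H\"older step, or some other device covering $p^{1/2}<A\le p^{2/3}$, for part (1).
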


\begin{proof}
	For $a,b \in \F_p$ set
	\[
	  r_{(\cA-a)/(\cA-b)} (x) = |\{ (c,d) \in \cA \times \cA : (c-a) / (d-b) = x \}|\,.
	\] 
From the definition of $E^{\shortslash}_3 (r_{\cA-\cA})$ collecting terms with the same value of 
$x_1 / y_1 = x_2  / y_2 = x_3 / y_3 = z$ we have
\begin{align*}
	E^{\shortslash}_3 \(r_{\cA-\cA}\)  &=  \sum_{z\in \F_p}   \sum_{x_1 x_2 ,   x_3  \in \F_p }
r_{\cA-\cA}(x_1) r_{\cA-\cA}(z x_1) \ r_{\cA-\cA} (x_2)    r_{\cA-\cA}(z y_2) \\
& \qquad \qquad \qquad \qquad \qquad \qquad  \qquad \qquad \quad  r_{\cA-\cA}(x_3) r_{\cA-\cA}(z x_3)\\
&  =  \sum_{z\in \F_p}  \( \sum_{x  \in \F_p }
r_{\cA-\cA}(x) r_{\cA-\cA}(xz)\)^3
\end{align*}
We now observe that 
\begin{align*}
 \sum_{x  \in \F_p }
r_{\cA-\cA}(x) r_{\cA-\cA}(z x)  & =    |\{ (a,b,c,d) \in \cA \times \cA : (c-a) / (d-b) = z \}| \\ & = \sum_{a,b \in \cA} r_{(\cA-a)/(\cA-b)} (z) \,.
\end{align*}

	Using the H\"older inequality 
\begin{equation}\label{eq:Holder}	
\begin{split}
	E^{\shortslash}_3 \(r_{\cA-\cA}\) &= \sum_{z\in \F_p} \left( \sum_{a,b\in \cA} r_{(\cA-a)/(\cA-b)} (z) \right)^3\\
	& \le A^4  \sum_{a,b\in \F_p} \sum_{z\in \F_p}   r^3_{(\cA-a)/(\cA-b)} (z)  \,.
	\end{split} 
\end{equation} 

Clearly 
\begin{align*}
& r^3_{(\cA-a)/(\cA-b)} (z)\\
& \qquad =
 \left| \left \{ c_1, d_1, c_2, d_2, c_3, d_3 ) \in \cA^6 :~\frac{c_1-a }{d_1-b} 
=\frac{c_2-a }{d_2-b} =\frac{c_3-a }{d_3-b}  = z \right\} \right|. 
\end{align*}
Thus 
\begin{align*}
   \sum_{z\in \F_p}  &r^3_{(\cA-a)/(\cA-b)} (z)\\
& =
 \left| \left \{ c_1, d_1, c_2, d_2, c_3, d_3 ) \in \cA^6 :~\frac{c_1-a }{d_1-b} 
=\frac{c_2-a }{d_2-b} =\frac{c_3-a }{d_3-b}    \right\} \right|.
\end{align*}
Therefore we see that 
\begin{equation}\label{eq:sumr3}	
 \sum_{a,b\in \F_p} \sum_{z\in \F_p} r^3_{(\cA-a)/(\cA-b)} (z) = Q(\cA),
\end{equation} 
where 
\[
Q(\cA) =   \left| \left \{ (a_1, \ldots, a_8 ) \in \cA^8 :~\frac{a_1-a_2}{b_1-b_2} 
= \frac{a_1-a_3}{b_1-b_3} = \frac{a_1-a_4}{b_1-b_4} \right\} \right|
\]
is the number of ordered collinear quadruples
\[
\((a_1, b_1), (a_2, b_2), (a_3, b_3), (a_4, b_4)\)  \in \(\cA \times \cA\)^4. 
\]
By~\cite[Theorem~11(2)]{MPR-NRS} we have 
\[
Q(\cA)  \ll \frac{A^8}{p^2} + A^5 \log A \ll  	A^9 \log A
\]
which together with~\eqref{eq:Holder}	 and~\eqref{eq:sumr3}	 gives the first inequality. 

To 	obtain the second inequality just apply the modern form of the incidence result
of Rudnev~\cite{Rudnev}, 
see, for example,~\cite[Theorem~10]{Shkr2}.
This completes the proof. 
\end{proof}

\begin{theorem}
\label{thm:D_improved_new}
	Let $\cA\subseteq \F_p$ be of cardinality $A \le p^{1/2}$.
Then 
	\[
	D^\times (\cA) \les  A^{84/13} \,. 
	\] 
\end{theorem}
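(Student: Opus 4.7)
The plan is to recognise $D^{\times}(\cA)$ as (essentially) the $\shortslash$-energy of the difference representation function $r_{\cA - \cA}(x) = |\{(a,b) \in \cA^2 : a - b = x\}|$, and then feed this function into Lemma~\ref{lem:gen_eigenvalues} with the inputs supplied by Lemma~\ref{lem:next}.

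First I would write
\[
D^{\times}(\cA) = \sum_{x_1 x_2 = x_3 x_4} r_{\cA - \cA}(x_1) r_{\cA - \cA}(x_2) r_{\cA - \cA}(x_3) r_{\cA - \cA}(x_4)\,.
\]
After separating out the degenerate contributions in which at least one $x_i$ vanishes --- these total $O(A^6)$ and are absorbed into the target bound since $84/13 > 6$ --- what remains coincides with $E^{\shortslash}(r_{\cA - \cA})$ (extending the notation from triple to double energy in the natural way), because, for nonzero entries, $x_1 x_2 = x_3 x_4$ is equivalent to $x_1 / x_3 = x_4 / x_2$. Since $\|r_{\cA - \cA}\|_1 = A^2$, every dyadic level set $\cB$ of $F = r_{\cA - \cA}$ that appears in the proof of Lemma~\ref{lem:gen_eigenvalues} satisfies $B \ll A^2$, so only the regime $S \le A^2$ of Lemma~\ref{lem:next} will be needed.

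Next I would verify the hypothesis~\eqref{cond:energy_gen_eigenvalues} of Lemma~\ref{lem:gen_eigenvalues} with $K \ll A^3$. Lemma~\ref{lem:next} gives
\[
E^{\shortslash}(r_{\cA - \cA}, \cS) \ll \frac{A^4 S^2}{p} + A^3 S^{3/2}\,,
\]
and the first term is dominated by the second exactly when $A S^{1/2} \le p$; substituting $S \le A^2$ reduces this condition to $A \le p^{1/2}$, which is the standing hypothesis. Then the triple energy bound $E^{\shortslash}_3 (r_{\cA - \cA}) \ll A^9 \log A \lesssim A^9$ (also from Lemma~\ref{lem:next}) together with $\|r_{\cA - \cA}\|_1 = A^2$ feeds into Lemma~\ref{lem:gen_eigenvalues} to give
\[
E^{\shortslash}(r_{\cA - \cA}) \lesssim (A^9)^{6/13} (A^3)^{2/13} (A^2)^{12/13} = A^{(54 + 6 + 24)/13} = A^{84/13}\,,
\]
which completes the proof. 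The only substantive step is matching the $K = O(A^3)$ threshold against Lemma~\ref{lem:next}, and this is precisely where the restriction $A \le p^{1/2}$ is used; everything else is routine bookkeeping.
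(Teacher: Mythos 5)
Your route is the same as the paper's: identify $D^\times(\cA)$ with the multiplicative (equivalently, after discarding zero terms, division) energy of $F=r_{\cA-\cA}$, verify the hypotheses of Lemma~\ref{lem:gen_eigenvalues} via Lemma~\ref{lem:next} with $E^{\shortslash}_3(F)\les A^9$, $K\ll A^3$ and $\|F\|_1= A^2$, and compute $(A^9)^{6/13}(A^3)^{2/13}(A^2)^{12/13}=A^{84/13}$. The numerology and the absorption of the degenerate contributions $O(A^6)$ are fine.

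There is, however, one genuine gap: your justification for only having to check the hypothesis~\eqref{cond:energy_gen_eigenvalues} in the range $S\le A^2$. Lemma~\ref{lem:gen_eigenvalues} as stated demands $E^\ast(F,\cS)\le KS^{3/2}$ for \emph{every} $\cS\subseteq\F_p$; inside its proof this is applied to arbitrary test sets $\cS$ fed into the eigenvalue theorem of~\cite[Theorem~34]{MPR-NRS}, not to the dyadic level set $\cB$. So the observation that $B\ll A^2$ is beside the point: it does not constrain the sets $\cS$ for which the hypothesis must hold, and Lemma~\ref{lem:next} gives you nothing when $A^2<S\le p$ (its second bound is only proved for $S\le A^2$, this restriction coming from the point--plane incidence input, not from the comparison of the two terms). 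The paper closes this by citing the remark following~\cite[Theorem~34]{MPR-NRS}, which allows one to verify~\eqref{cond:energy_gen_eigenvalues} only for sets of cardinality $S\ll A^4/E^\times(\cA)\le A^2$. Alternatively, since you have already removed the terms with a vanishing entry (so effectively $F(0)=0$), you can close the gap directly: for any $\cS$ with $S\ge A^2$, fixing $x_1,x_2\ne 0$ and $u_1\in\cS$ determines $u_2$, whence
\[
E^{\shortslash}(F,\cS)\le \|F\|_1^2\, S \le A^4 S\le A^3 S^{3/2}\,,
\]
which is exactly~\eqref{cond:energy_gen_eigenvalues} with $K=A^3$ in the complementary range. (Note that without removing the zero terms this trivial bound fails, e.g.\ for $\cS=\F_p^*$, so some such fix is genuinely needed.) Either repair is short, but as written your reduction to $S\le A^2$ does not follow from the smallness of the level sets.
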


\begin{proof}
We begin by noting that
\[
D^\times(\cA) = E^\times(r_{\cA-\cA}) \,.
\]
By the first inequality of Lemma~\ref{lem:next} we have 
\[
E^\times_3(r_{\cA-\cA}) \les A^9
\]
while  for all $\cS$ satisfying $S \leq A^2 \leq p$ the second inequality  becomes
\[
E^\times (r_{\cA-\cA}, \cS) \ll \frac{A^4 S^2}{p} + A^3 S^{3/2} \ll A^3 S^{3/2} \,.
\]

By the remark after~\cite[Theorem~34]{MPR-NRS}, we only have to confirm~\eqref{cond:energy_gen_eigenvalues} for sets $\cS$ of cardinality $S \ll A^4 / E^\times(A) \leq A^2$, and so Lemma~\ref{lem:next} gives $K=A^3$. Moreover 
\[
\| r_{\cA - \cA} \|_1 = \sum_{x \in \F_p}  r_{\cA - \cA}  (x)  = A^8\,.
\] 
 Substituting all this in Lemma~\ref{lem:gen_eigenvalues} with $F(x) = r_{\cA - \cA}  (x)$ gives
\[
D^\times(\cA) = E^\times(r_{\cA-\cA}) \les A^{84/13} \,, 
\]
which concludes the proof.
\end{proof}

\section{A refined bound on $D^\times(\cG, \cH )$ over subgroups}
\label{sec:Dsubgroups}

Here we  use $\widetilde T(\cG,\cH)$ to give stronger bounds on $D^\times(\cG, \cH )$, where $\cG$ and $\cH$ are multiplicative subgroups.

First we recall the following result~\cite[Theorem~2]{Mit}. 
\begin{lemma} \label{lem:Mit}
Let $\cG$ and $\cH$ be  subgroups of $\F_p^*$ and let 
 $\cM_\cG$ and $\cM_\cH $   
be two complete sets  of distinct coset representatives 
of $\cG$ and $\cH$  in $\F_p^*$.
For an arbitrary set $\varTheta \subseteq\cM_\cG \times \cM_\cH$ such that 
\[
|\varTheta| \le \min\left\{ |\cG||\cH|, \frac{p^3}{|\cG|^2|\cH|^2}\right\}
\]
we have
\[
\sum_{(u,v) \in \varTheta}\left |\{(x,y) \in \cG\times \cH~:~ux+vy=1\}\right| \ll (|\cG||\cH||\varTheta|^2)^{1/3}.
\]
\end{lemma}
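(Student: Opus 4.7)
The plan is to interpret the sum as a point--line incidence count in $\F_p^2$ and to apply a higher-moment estimate that exploits the Cartesian product structure $\varTheta \subseteq \cM_\cG \times \cM_\cH$. For each $(x,y) \in \cG \times \cH$, the equation $ux+vy=1$ defines a line $\ell_{x,y}$ in the $(u,v)$-plane, and the map $(x,y)\mapsto \ell_{x,y}$ is injective, so the family $\cL = \{\ell_{x,y}\}$ has exactly $|\cG||\cH|$ members. Writing $I$ for the left-hand side of the claimed inequality and $r(u,v) = |\{(x,y)\in\cG\times\cH : ux+vy=1\}|$, the sum $I = \sum_{(u,v)\in\varTheta} r(u,v)$ is precisely the incidence count between the point set $\varTheta$ and the line set $\cL$.

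The main reduction is via H\"older's inequality:
\[
I^{3} \le |\varTheta|^{2} \sum_{(u,v)\in\varTheta} r(u,v)^{3}.
\]
The cubic sum counts ordered triples of lines in $\cL$ concurrent at a point of $\varTheta$. Peeling off degenerate contributions (two or three of the lines coinciding), the off-diagonal part is bounded by the number of triples of distinct collinear points in $\cG \times \cH$ whose common line has equation $ux+vy=1$ for some $(u,v) \in \varTheta$.

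The heart of the argument is then the estimate $\sum_{(u,v)\in\varTheta} r(u,v)^{3} \ll |\cG||\cH|$. Here one invokes Rudnev's point--plane incidence theorem in $\F_p$, combined with the fact that $\varTheta$ lies in a product of distinct coset representatives, so that distinct elements of $\varTheta$ yield genuinely distinct lines (any coincidence $\ell_{u,v}=\ell_{u',v'}$ would force $u/u'\in\cG$ and $v/v'\in\cH$, contradicting the choice of $\cM_\cG$ and $\cM_\cH$). The two hypotheses $|\varTheta|\le|\cG||\cH|$ and $|\varTheta| \le p^{3}/(|\cG|^{2}|\cH|^{2})$ ensure, respectively, that the diagonal terms from Step~3 are of lower order and that Rudnev's bound is applied in its sharp regime, where the $\F_p$-specific correction term is dominated by the main Szemer\'edi--Trotter-type term. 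Substituting back yields $I^{3} \ll |\cG||\cH||\varTheta|^{2}$, as claimed.

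The principal obstacle is the cubic-moment bound. A naive application of Rudnev's theorem to $|\varTheta|$ points and $|\cG||\cH|$ lines yields only $I \ll (|\cG||\cH||\varTheta|)^{2/3}$, which is weaker than the claim by a factor $(|\cG||\cH|)^{1/3}$; the extra saving must come from the Cartesian-product and subgroup structure, used to show that most triples of lines through a common point of $\varTheta$ correspond to collinear triples in $\cG\times\cH$ which are rare because of the multiplicative structure. A secondary technical point is the careful separation of degenerate configurations (coincident points, lines through the origin) from the main off-diagonal term, so that the degenerate part is genuinely subdominant under the stated constraints on $|\varTheta|$.
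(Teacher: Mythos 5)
There is a genuine gap, and it sits exactly at the step you call the ``heart of the argument.'' Note first that the paper does not prove this lemma at all: it is quoted verbatim as Theorem~2 of Mit'kin, whose proof is by Stepanov's method of auxiliary polynomials, not by incidence geometry. Your H\"older reduction $I^3 \le |\varTheta|^2 \sum_{(u,v)\in\varTheta} r(u,v)^3$ is correct but makes no real progress: the required cubic--moment bound $\sum_{(u,v)\in\varTheta} r(u,v)^3 \ll |\cG||\cH|$ is, up to logarithms, \emph{equivalent} to the lemma itself (apply the lemma to the level sets $\varTheta_\tau=\{(u,v): r(u,v)\ge\tau\}$ to get $|\varTheta_\tau|\ll |\cG||\cH|\tau^{-3}$ and sum dyadically; conversely your H\"older step recovers the lemma). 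So everything hinges on that estimate, and for it you offer only an appeal to Rudnev's point--plane theorem ``combined with the product structure,'' while your own closing paragraph concedes that this leaves a factor $(|\cG||\cH|)^{1/3}$ unaccounted for, to be recovered by an unspecified use of ``multiplicative structure.'' That missing factor is the entire content of the lemma.

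Moreover, the tool you name cannot supply it. Already the case $|\varTheta|=1$ asserts that a single line $ux+vy=1$ meets $\cG\times\cH$ in $\ll (GH)^{1/3}$ points whenever $GH\le p^{3/2}$; for $\cG=\cH$ this is the classical Heath--Brown--Konyagin/Mit'kin bound $|u\cG\cap(1-v\cG)|\ll G^{2/3}$ for $G\le p^{3/4}$, known only through Stepanov-type polynomial constructions. Incidence or collinear-triple estimates (Rudnev, Stevens--de Zeeuw, or the paper's own Theorem~\ref{thm:subgroups}) bound the \emph{total} number of collinear triples of $\cG\times\cH$ by roughly $G^3H^3/p+G^2H^2\log G$, so for one line they give only $r\ll GH p^{-1/3}+(GH)^{2/3}$, essentially the square of what is needed; and for general $\varTheta$ you would need the triples supported on your $|\varTheta|\le GH$ chosen lines to number $\ll GH$, a saving of a full factor $GH$ over these grid-wide bounds, with no mechanism in Rudnev's theorem to localize the count to those lines. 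The degenerate-configuration bookkeeping you mention is not the issue; the decisive estimate is simply not proved, and the incidence-geometric route you propose is not known to reach it --- the correct proof (Mit'kin's) constructs an auxiliary polynomial vanishing to high order at the counted points.
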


\begin{theorem}
	\label{thm:subgroups}
Let $\cG, \cH \subseteq \F^*_p$ be subgroups with $|\cG|=G, |\cH|=H$ and $G \geq H$. Then 
\[
\widetilde T(\cG,\cH) - \frac{G^3 H^3}{p} \ll \left\{ 
\begin{array}{ll}
p^{1/2} G^{3/2} H^2 & \text{if $GH \ge p^{4/3}$,} \\  
\displaystyle{\frac{G^{5/2}H^{5/2}}{p^{1/2}} + H^2 G^2 \log{G} } & \text{if $ p < GH < p^{4/3} $,}  
\\  
G^3H\log{G} & \text{if $GH \le p$.} \end{array}\right. 
\]
\end{theorem}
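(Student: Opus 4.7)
My plan is to recast $\widetilde T(\cG, \cH)$ as a third moment of incidences of $\cG \times \cH$ with lines of nonzero finite slope and then exploit the multiplicative symmetry of $\cG$ and $\cH$ through Lemma~\ref{lem:Mit}. Setting $I(\ell) = |\ell \cap (\cG \times \cH)|$, the defining relation records that each line $\ell$ contributes $I(\ell)(I(\ell)-1)^2$ ordered triples, so $\widetilde T(\cG, \cH) = \sum_\ell I(\ell)(I(\ell)-1)^2$, with the sum over lines of nonzero finite slope. The main term $G^3 H^3 / p$ is the uniform-distribution (Cauchy--Schwarz) part of $\sum_\ell I(\ell)^3$, so the task reduces to bounding the excess.

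First I would parametrize each non-origin nonzero-slope line as $\ell_{u, v}\colon ux + vy = 1$ with $(u, v) \in (\F_p^*)^2$ and observe that $I(\ell_{u, v}) = I(\ell_{ug, vh})$ for every $(g, h) \in \cG \times \cH$ via the substitution $(x, y) \mapsto (x/g, y/h)$. Hence $I$ descends to a function on $\cM_\cG \times \cM_\cH$, each coset pair representing exactly $GH$ distinct lines of common incidence, and
\[
\sum_\ell I(\ell)^3 = GH \sum_{(u, v) \in \cM_\cG \times \cM_\cH} I(u, v)^3.
\]

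The key step is a dyadic decomposition of the level sets $\Theta_t = \{(u, v) \in \cM_\cG \times \cM_\cH : I(u, v) \geq t\}$. Provided $|\Theta_t| \leq \min(GH, p^3/(GH)^2)$, Lemma~\ref{lem:Mit} applied to $\Theta_t$ yields
\[
t |\Theta_t| \leq \sum_{(u, v) \in \Theta_t} I(u, v) \ll (GH |\Theta_t|^2)^{1/3},
\]
so $|\Theta_t| \ll GH/t^3$. The three regimes of the theorem correspond to which of the two caps in Mitjagin's hypothesis is active: for $GH \geq p^{4/3}$ the cap $|\Theta| \leq p^3/(GH)^2$ is binding, and splitting the dyadic sum around the ``average'' threshold $t \sim GH/p$ gives $\ll p^{1/2} G^{3/2} H^2$; for $GH \leq p$ the cap is $|\Theta| \leq GH$, Mitjagin applies for every $t \geq 1$, and the dyadic sum telescopes to $\ll G^3 H \log G$; the intermediate regime $p < GH < p^{4/3}$ is a hybrid in which both caps act at different scales and produce the two-term bound.

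The main obstacle will be the bookkeeping of these three cases together with the clean isolation of the main term $G^3 H^3 / p$ from $\sum_\ell I(\ell)(I(\ell)-1)^2$: absorbing the lower-order correction terms $\sum I(\ell)^2$ and $\sum I(\ell)$, handling the separate contribution from the $p - 1$ lines through the origin (where $I(\ell)$ equals $|\cG \cap \lambda^{-1} \cH|$ for a slope $\lambda$), and optimizing each dyadic threshold so that the claimed exponents emerge precisely in each regime.
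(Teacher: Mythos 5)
Your combinatorial setup (descending the incidence count to $\cM_\cG\times\cM_\cH$, dyadic level sets $\varTheta_t$, and Lemma~\ref{lem:Mit} giving $|\varTheta_t|\ll GH\,t^{-3}$) is exactly the engine of the paper's argument, but only for the range $p<GH<p^{4/3}$, which is the only case proved afresh there (the cases $GH\le p$ and $GH\ge p^{4/3}$ are quoted from \cite[Lemma~2.6]{MSS} and \cite[Theorem~1.1]{Mac1}). Measured against that, your outline has two genuine gaps. First, you never say what to do when a level set violates Mit'kin's hypothesis, i.e.\ when $|\varTheta_t|>p^3/(G^2H^2)$, which is a real possibility for $GH>p$ and small $t$. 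The paper spends most of its proof on precisely this point: a point--line incidence bound (Stevens--de Zeeuw) shows such large level sets cannot occur for any $\tau>\Delta$ with $\Delta\asymp (GH)^{3/2}/p^{3/2}$, and it is this forced threshold, fed into the below-threshold term $\Delta\,pGH$, that produces $G^{5/2}H^{5/2}/p^{1/2}$ --- not ``both caps acting at different scales'' as you suggest. Second, the statement bounds $\widetilde T(\cG,\cH)-G^3H^3/p$, so the main term must emerge with coefficient exactly one; trivially capping the sub-average levels, as your sketch implies, only yields $\widetilde T(\cG,\cH)\ll G^3H^3/p+\cdots$, which is weaker. The paper avoids this by first invoking the second-moment decomposition of \cite[Corollaries~2.3 and~2.5]{MSS} (in essence $\sum_\ell (I(\ell)-GH/p)^2\ll pGH$) to reduce, before any use of Lemma~\ref{lem:Mit}, to lines with $I(\ell)>\Delta$; this is a required analytic input, not the ``absorbing $\sum I^2$ and $\sum I$'' bookkeeping you defer.

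There is also a step that is wrong rather than merely missing: the regime $GH\ge p^{4/3}$ cannot be extracted from Lemma~\ref{lem:Mit} as you claim. Once $GH>p^{3/2}$ the admissible size $p^3/(G^2H^2)$ is smaller than $1$, so the lemma applies to no nonempty $\varTheta$ at all; and even for $p^{4/3}\le GH\le p^{3/2}$, ``splitting the dyadic sum around $t\sim GH/p$'' with only trivial caps on the large level sets produces contributions of size about $pGH^3$, which exceed $p^{1/2}G^{3/2}H^2$ in general. That range needs a different incidence argument, which is why the paper simply cites \cite[Theorem~1.1]{Mac1} there. Your treatment of the lines through the origin (contribution $\ll GH\,|\cG\cap\cH|^2$) is indeed routine, so the plan can be completed for $GH\le p$, and for $p<GH<p^{4/3}$ once the two missing ingredients above are supplied; but as written the proposal does not prove the stated theorem.
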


\begin{proof}
The result for $GH\le p$ is clear from~\cite[Lemma~2.6]{MSS} once we eliminate the contribution from the zero solutions (see also~\cite{Shkr1}). 
Similarly for $GH \ge p^{4/3}$ from~\cite[Theorem~1.1]{Mac1}. We now prove for $p<GH<p^{4/3}$ by following the argument of~\cite{MSS}. 

We can think of $\cM_{\cG}=\F^*_p/\cG$ and similarly for $\cM_\cH$. We now set 
\[
3 \le \Delta = c \, \frac{G^{3/2} H^{3/2}}{p^{3/2}}
\]
for a sufficiently small $c$ to be chosen later (it is better to think of $\Delta$ as a parameter to be chosen later), 
by~\cite[Corollaries 2.3 and 2.5]{MSS} and by observing that the contribution from lines with $ab=0$ is at most $O(G^2H^2)$ we have
\begin{equation}
\begin{split}
 \label{eq:TGH}
\widetilde T(\cG,\cH)  -\frac{G^3H^3}{p} & \ll G^2H^2 + \Delta pGH\\
&\quad  +\sum_{\substack{ a,b \in \F^*_p \\ \iota_{\cG,\cH}(\ell_{a,b})>\Delta}} \iota_{\cG,\cH}(\ell_{a,b})\left(\iota_{\cG,\cH}(\ell_{a,b})- \frac{GH}{p} \right)^2.
	\end{split}
\end{equation}
Let
\[
W=\sum_{\substack{ a,b \in \F^*_p \\ \iota_{\cG,\cH}(\ell_{a,b})>\Delta}} \iota_{\cG,\cH}(\ell_{a,b})\left(\iota_{\cG,\cH}(\ell_{a,b})- \frac{GH}{p} \right)^2.
\]
We now let $\tau > \Delta$ be another parameter and define
\[
\varTheta_\tau = \left\{ (\alpha,\beta) \in \cM_\cG \times\cM_\cH: |\{(x,y) \in \cG\times \cH~:~ux+vy=1\}| \ge \tau \right\}.
\]
 Hence,
\[
 \varTheta_\tau = \left\{ (\alpha,\beta) \in \cM_\cG \times\cM_\cH: \iota_{\cG,\cH}(\ell_{-a\beta^{-1},\beta^{-1}}) \ge \tau \right\}.
\]
By Lemma~\ref{lem:Mit} we have 
\begin{equation} \label{eq:The-tau}
|\varTheta_\tau|\tau \ll (GH)^{1/3}|\varTheta_\tau|^{2/3}
\end{equation}
provided $G^2H^2|\varTheta_\tau| \le p^3$ and $|\varTheta_\tau| \le GH$. Clearly the second condition is satisfied 
since $|\varTheta_\tau| \le |\cM_\cG||\cM_\cH|=(p-1)^2/(GH) \le GH$. 

We now suppose that  $G^2H^2|\varTheta_\tau| > p^3$. We define
\[
\cQ_\tau = \left\{ (\alpha,\beta) \in \F^*_p\times\F^*_p: \iota_{\cG,\cH}(\ell_{-a\beta^{-1},\beta^{-1}}) \ge \tau \right\}.
\]
We can then think of $\varTheta_\tau$ as a union of cosets. We have the number of incidences between $\cG \times \cH$ and lines $\ell_{-a\beta^{-1},\beta^{-1}}$ with $\alpha, \beta \in \cQ_\tau$ is at least
\[
|\cQ_\tau|\tau=GH|\varTheta_\tau|\tau >p^3G^{-1}H^{-1}\tau \ge p^3G^{-1}H^{-1}\Delta. 
\]
But from~\cite{StdeZe} we have the number of point--line incidences 
\[
|\cQ_\tau|\tau \ll |\cQ_\tau|^{1/2}GH+|\cQ_\tau|,
\]
hence
\[
p^3G^{-1}H^{-1}\Delta < |\cQ_\tau|\tau \ll G^2H^2\tau^{-1}<G^2H^2\Delta^{-1}.
\]
It follows that for this inequality to hold we need $\Delta^2\ll G^3H^3/p^3$. By choosing the constant $c$ in the definition of $\Delta$ small enough, we ensure this never happens. 

We now let $\tau_j=e^j\Delta$, for $j=0, 1, \dots, J$ and $J=\lceil \log(G^{1/2}H^{1/2}/\Delta) \rceil.$ We observe that the contributions from lines $\ell_{a,b}$, $a,b \in F^*_p$ is in one to one correspondence with those given by $\ell_{-a\beta^{-1},\beta^{-1}}$. 
Now from~\eqref{eq:The-tau} we have
\[
|\cQ_\tau|=GH|\varTheta_\tau|\ll G^2H^2\tau^{-3}.
\]
We also have $\tau_j \ge \tau_0 =\Delta \gg G^{3/2}H^{3/2}/p^{3/2}> GH/p$ for all $j$.
It follows that the contribution to $W$ is bounded by
\[
\sum_{j=0}^{J} |\cQ_{\tau_j}|\tau_{j+1}(\tau_{j+1} - GH/p)^2 \ll \sum_{j=0}^{J}|\cQ_{\tau_j}|\tau_{j+1}^3 \ll\sum_{j=0}^{J} G^2H^2\ll G^2H^2\log G.
\]
Substituting into~\eqref{eq:TGH} we have the required result.
\end{proof}

In particular, we see from Theorem~\ref{thm:subgroups}  that 
\[
\widetilde T(\cG,\cH)  \ll \left\{ 
\begin{array}{ll}
G^3H^3/p & \text{if $GH\ge p\log p$,} \\ 
G^2H^2\log{G} & \text{if $GH<p\log p$.} \end{array}\right. \,.
\] 

Since $\widetilde T(\cG,\cH) \le GH \widetilde D^\times(\cG,\cH)$, where $\widetilde D^\times(\cG,\cH)$ does not include the
zero solutions of $D^\times(\cG,\cH)$, we have the following. 

\begin{cor} \label{cor:Dgroup}
Let $\cG, \cH \subseteq \F^*_p$ be subgroups  of orders  $G\ge H$. Then 
\[
\widetilde D^\times(\cG,\cH) \ll \left\{ 
\begin{array}{ll}
G^4H^4/p & \text{if $GH\ge p\log p$,} \\ 
G^3H^3\log{G} & \text{if $GH<p\log p$.} \end{array}\right.
\]
\end{cor}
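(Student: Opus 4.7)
The plan is to deduce the corollary from Theorem~\ref{thm:subgroups} by first establishing the inequality $\widetilde D^\times(\cG,\cH)\leq GH\cdot\widetilde T(\cG,\cH)$ and then substituting the two-case bound on $\widetilde T(\cG,\cH)$ displayed just before the corollary. To obtain this inequality, I would reformulate $\widetilde D^\times(\cG,\cH)$ as a same-slope count: the substitution $(v_1,v_2)\leftrightarrow(v_3,v_4)$ is a bijection between solutions in $\cG^4\times\cH^4$ of the defining equation of $\widetilde D^\times(\cG,\cH)$ and solutions of $(u_1-u_2)(v_3-v_4)=(u_3-u_4)(v_1-v_2)\neq 0$, and the latter exactly expresses the equality of the non-zero slopes $(v_1-v_2)/(u_1-u_2)=(v_3-v_4)/(u_3-u_4)=s$. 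Setting
\[
D^*(s)=|\{(u_1,u_2,v_1,v_2)\in\cG^2\times\cH^2:v_1-v_2=s(u_1-u_2),\ u_1\neq u_2\}|
\]
for $s\in\F_p^*$, this identification produces the identity $\widetilde D^\times(\cG,\cH)=\sum_{s\in\F_p^*}D^*(s)^2$.

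Next, for each $s\neq 0$ and $c\in\F_p$ let $f_s(c)=|\{(u,v)\in\cG\times\cH:v=su+c\}|$ count the grid points on the line of slope $s$ and intercept $c$. Then $D^*(s)=\sum_c f_s(c)(f_s(c)-1)$ counts ordered pairs of distinct grid points lying on lines of slope $s$, while $\sum_c f_s(c)(f_s(c)-1)^2$ is precisely the slope-$s$ contribution $\widetilde T_s$ to $\widetilde T(\cG,\cH)$. Cauchy--Schwarz applied with weights $\sqrt{f_s(c)}$ and $\sqrt{f_s(c)}(f_s(c)-1)$, combined with $\sum_c f_s(c)=GH$, gives
\[
D^*(s)^2\leq\Bigl(\sum_c f_s(c)\Bigr)\Bigl(\sum_c f_s(c)(f_s(c)-1)^2\Bigr)=GH\cdot\widetilde T_s,
\]
and summing over $s\in\F_p^*$ yields $\widetilde D^\times(\cG,\cH)\leq GH\cdot\widetilde T(\cG,\cH)$. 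Plugging in the bound $\widetilde T\ll G^3H^3/p$ when $GH\geq p\log p$ and $\widetilde T\ll G^2H^2\log G$ when $GH<p\log p$, both displayed immediately before the corollary, then yields the two claimed estimates.

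I do not anticipate serious obstacles. The only genuinely substantive step is the Cauchy--Schwarz splitting $f_s(c)(f_s(c)-1)=\sqrt{f_s(c)}\cdot\sqrt{f_s(c)}(f_s(c)-1)$, which converts the ``square of $\sum_c f_s(c)(f_s(c)-1)$'' into a ``sum of cubes'' controlled by $\widetilde T$; the reformulation of $\widetilde D^\times$ via the bijection on $v$-coordinates and the invocation of Theorem~\ref{thm:subgroups} are immediate.
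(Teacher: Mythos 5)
Your proof is correct and takes essentially the same route as the paper: the paper simply asserts the key inequality relating $\widetilde D^\times(\cG,\cH)$ and $GH\,\widetilde T(\cG,\cH)$ (in fact writing it with the two quantities transposed, evidently a typo) and then substitutes the bounds on $\widetilde T(\cG,\cH)$ displayed after Theorem~\ref{thm:subgroups}, exactly as you do. Your slope-by-slope Cauchy--Schwarz argument is a valid justification of $\widetilde D^\times(\cG,\cH)\le GH\,\widetilde T(\cG,\cH)$ under the intended reading of $\widetilde T$ as the collinear-triple count $\sum_{s\in\F_p^*}\sum_c f_s(c)(f_s(c)-1)^2$ (which is what Theorem~\ref{thm:subgroups} actually bounds; the displayed equation in the paper's definition of $\widetilde T$ is itself slightly garbled, and for that variant the same inequality follows by Cauchy--Schwarz in the pinned variable $(u_1,v_1)$).
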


We mention the above result is only new for $G$ and $H$ falling either side of $(p \log p)^{1/2}$; see the proof of~\cite[Lemma~3.5]{MSS}.

\section{Applications}
\label{sec:applications}

Let us record what Theorem~\ref{thm:D_improved_new} gives for the exponential sums mentioned in the Introduction. 

\begin{theorem} 
	\label{thm:STsums}
For any sets $\cX, \cY, \cZ \subseteq \F_p$ as in~\eqref{eq:sets} 
and complex weights $\alpha$, $\beta$ and $\gamma$ as in~\eqref{eq:weights1} 
or $\rho$, $\sigma$ and $\tau$ as in~\eqref{eq:weights2}, for the sums 
 $S(\cX, \cY, \cZ; \alpha, \beta, \gamma)$  defined as in~\eqref{eq:SXYZ} or	
 $T(\cX, \cY, \cZ; \rho, \sigma, \tau)$  defined as in~\eqref{eq:TXYZ} we have	
\begin{itemize}
\item For $Y \leq p^{1/2}$
\[
S(\cX, \cY, \cZ;\alpha, \beta, \gamma)  \les p^{1/4} X^{3/4} Y^{21/26} Z^{1/2} E^{\times}(\cZ)^{1/8} \,. 
\]
\item For $Y , Z \leq p^{1/2}$
\[
T(\cX, \cY, \cZ ;  \rho, \sigma, \tau)  \les p^{1/8} X^{7/8} Y^{47/52} Z^{47/52} + XYZ^{3/4} \,.
\]
\end{itemize}
\end{theorem}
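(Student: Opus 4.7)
The plan is to substitute the new bound $D^\times(\cA) \les A^{84/13}$ from Theorem~\ref{thm:D_improved_new} directly into the Cauchy--Schwarz machinery developed in~\cite{PetShp} and refined in~\cite{Shkr2}, with no essentially new combinatorial input. The existing proofs of~\eqref{eq:P-S} and~\eqref{eq:Shkr} (and of the analogous statement for $S$ in~\cite[Theorem~1.1]{PetShp}) are organised so that the final exponent depends linearly on the exponent $\theta$ appearing in an assumed upper bound $D^\times(\cA)\les A^\theta$ (valid in some range of $A$). It therefore suffices to run those proofs verbatim with $\theta=84/13$ in place of $\theta=13/2$ or $\theta=13/2-1/192$, and to track the hypotheses that ensure Theorem~\ref{thm:D_improved_new} is applicable.

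For the bound on $T(\cX,\cY,\cZ;\rho,\sigma,\tau)$, I would follow the proof of~\cite[Theorem~1.3]{PetShp}: apply Cauchy--Schwarz in the $\cX$-variable, expand the square, complete the $x$-sum over $\F_p$, and apply Cauchy--Schwarz once more to symmetrise in $\cY$ and $\cZ$. This reduces matters to estimating $D^\times(\cY)$ and $D^\times(\cZ)$; the hypothesis $Y,Z\le p^{1/2}$ is precisely what is needed for Theorem~\ref{thm:D_improved_new} to apply to each. Since $13/2 - 84/13 = 1/26$ and the $T$-machinery maps a decrement $13/2-\theta$ to a decrement $(13/2-\theta)/16$ in the exponent of $YZ$, the new exponent is $29/32 - 1/(16\cdot 26) = 377/416 - 1/416 = 47/52$, giving the main term $p^{1/8} X^{7/8} (YZ)^{47/52}$. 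The additive term $XYZ^{3/4}$ absorbs the diagonal contribution coming from the Parseval-type term $\le p X Y Z^{3/2}$ in the $D^\times$-reduction, exactly as in~\cite{PetShp,Shkr2}.

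For the bound on $S(\cX,\cY,\cZ;\alpha,\beta,\gamma)$, the argument of~\cite[Theorem~1.1]{PetShp} is similar but asymmetric: Cauchy--Schwarz in $x$ and completion of the $x$-sum produce a count which splits into a factor of $D^\times(\cY)^{1/4}$ and a factor of $E^\times(\cZ)^{1/4}$ after one further Cauchy--Schwarz, while the weight $Z$ enters linearly. With the $S$-machinery sending a decrement $13/2-\theta$ to a decrement $(13/2-\theta)/8$ in the exponent of $Y$, substituting $\theta=84/13$ gives $13/16 - 1/(8\cdot 26) = 169/208 - 1/208 = 21/26$, producing the bound $p^{1/4} X^{3/4} Y^{21/26} Z^{1/2} E^{\times}(\cZ)^{1/8}$. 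Again the hypothesis $Y\le p^{1/2}$ is exactly what is required to invoke Theorem~\ref{thm:D_improved_new}.

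The only real difficulty is bookkeeping: verifying that every intermediate Cauchy--Schwarz step leaves the set $\cY$ (and, for $T$, also $\cZ$) within the range $A\le p^{1/2}$ of applicability of Theorem~\ref{thm:D_improved_new}, and checking that no additional error terms beyond the displayed $XYZ^{3/4}$ arise from the Parseval/diagonal contributions. No combinatorial obstacle is expected, and the arithmetic above confirms the exponents claimed in the statement.
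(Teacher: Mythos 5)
Your proposal is correct and follows essentially the same route as the paper: the paper simply quotes the intermediate inequalities from the proofs of~\cite[Theorems~1.1 and~1.3]{PetShp} (namely $S \ll p^{1/4}X^{3/4}Z^{1/2}N(\cZ,\cY,\cY)^{1/4}$ together with~\eqref{eq:N ED}, and $T \ll p^{1/8}X^{7/8}Y^{1/2}D^\times(\cY)^{1/16}Z^{1/2}D^\times(\cZ)^{1/16}+XYZ^{3/4}$) and then inserts Theorem~\ref{thm:D_improved_new}, exactly as you do by re-running that machinery with $\theta=84/13$. Your exponent bookkeeping ($47/52$ and $21/26$) and the role of the hypotheses $Y\le p^{1/2}$, $Z\le p^{1/2}$ match the paper; the only cosmetic slip is describing the split of $N(\cZ,\cY,\cY)$ as giving $D^\times(\cY)^{1/4}E^\times(\cZ)^{1/4}$ rather than the correct eventual factors $D^\times(\cY)^{1/8}E^\times(\cZ)^{1/8}$, which does not affect your final bounds.
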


\begin{proof}
It is shown in the proof of~\cite[Theorem~1.1]{PetShp} that 
\[
S(\cX, \cY, \cZ;\alpha, \beta, \gamma) 
 \ll p^{1/4} X^{3/4} Z^{1/2} N(\cZ,\cY,\cY)^{1/4}. 
\]
Hence, by~\eqref{eq:N ED} we have
\[
S(\cX, \cY, \cZ;\alpha, \beta, \gamma)   \ll p^{1/4} X^{3/4} D^\times(\cY)^{1/8} Z^{1/2} E^{\times}(\cZ)^{1/8}\,.
\]
Furthermore, it  is shown in the proof of~\cite[Theorem~1.3]{PetShp} that 
\[
T(\cX, \cY, \cZ ;  \rho, \sigma, \tau)  \ll p^{1/8} X^{7/8} Y^{1/2} D^\times(\cY)^{1/16}  Z^{1/2} D^\times(\cZ)^{1/16} + XYZ^{3/4} \,.
\]
Using Theorem~\ref{thm:D_improved_new} proves both claims.
\end{proof}

The bound on $T(\cX, \cY, \cZ ;  \rho, \sigma, \tau)$ improves~\eqref{eq:Shkr} from~\cite{Shkr2}. Note that the range of non-triviality, that is the set of values of $X, Y, Z$ for which $|T(\cX, \cY, \cZ ;  \rho, \sigma, \tau)|$ is smaller than $XYZ$, obtained via the triangle-inequality, is $X^{13} Y^{10} Z^{10} \ges p^{13}$. Taking $\cX = \cY = \cZ$ gives that when $X \ges p^{13/33}$ the bound in Theorem~\ref{thm:STsums} is non-trivial. The example where $\cX = \cY = \cZ = \{1, \dots, X\}$ and all the weights equal 1, shows that $X \gg p^{1/3}$ is necessary.

The bound on $S(\cX, \cY, \cZ;\alpha, \beta, \gamma)$ improves~\cite{PetShp} under some conditions. For example, when $\cX=\cY=\cZ$,  the bound
\[
S(\cX, \cX, \cX;\alpha, \beta, \gamma) \ll p^{1/4} X^{19/8} \,,
\]
has been given in~\cite{PetShp}, while Theorem~\ref{thm:D_improved_new} gives
\[
S(\cX, \cX, \cX;\alpha, \beta, \gamma) \les p^{1/4} X^{107/52} E^{\times}(\cX)^{1/8} \,,
\] 
which is better when $E^{\times}(\cX) \leq X^{33/13 - \varepsilon}$ for some $\varepsilon >0$. Note that $33/13 > 5/2$ so this condition is not very restrictive as is known to be satisfied for many special sets. For example, see~\cite[Proposition~1]{Shkr1}, 
for shifted multiplicative subgroups of $\F_p^*$.

Another interesting example is given by the set 
$\cZ = \{z^{-1} + a:~ z \in \cI \} \subseteq \F_p$ of shifted reciprocals modulo $p$ of 
integers of an  interval $ \cI = [k+1, k+Z]$ (embedded in $\F_p$) with some integers $k$ and $Z \ge 1$.
Clearly, the equation 
\[
\(u^{-1} + a\) \(v^{-1} + a\)  = \(y^{-1} + a\) \(z^{-1} + a\), \qquad u, v,  y, z \in \cI\,,
\]
has $O(Z^2)$ solutions with 
\[
\(u^{-1} + a\) \(v^{-1} + a\)  = \(y^{-1} + a\) \(z^{-1} + a\)= 0 \mbox{ or } a^2 , \quad   u, v,  y, z \in \cI\,.
\]
Otherwise we 
reduce it to $O(Z^2)$ equations of the form 
\begin{equation}
 \label{eq:lambda}
\(y^{-1} + a\) \(z^{-1} + a\)  = \lambda , \qquad   y, z \in \cI\,,
\end{equation}
with some fixed $\lambda  \in \F_p \setminus \{ 0, a^2\}$. 
One verifies that~\eqref{eq:lambda} is equivalent to
\[
(y+b)(z+b) = \mu , \qquad   y, z\in \cI\setminus \{ 0\} \,,
\]
with 
\[
b = \frac{a}{a^2 -\lambda} \mand \mu = -   \frac{\lambda}{(a^2 -\lambda)^2} , 
\]  
which by  a result of   Cilleruelo and Garaev~\cite[Equation~(3)]{CillGar}, has
at most $Z^{3/2}p^{-1/2+o(1)} + Z^{o(1)}$ solutions. 
Hence 
\[
E^\times (\cZ) \le Z^{7/2}p^{-1/2+o(1)} + Z^{2+o(1)}. 
\]

\begin{theorem} 
	\label{thm:Tgroup}
For subgroups $\cF, \cG, \cH \subseteq \F_p$  with $G \geq H$ 
and complex weights $\rho$, $\sigma$ and $\tau$ as in~\eqref{eq:weights2}, for the	
 $T(\cF, \cG, \cH; \rho, \sigma, \tau)$  defined as in~\eqref{eq:TXYZ} we have	 
  \begin{itemize}  
\item For $GH \geq p\log p $, 
\[
T(\cF, \cG, \cH; \rho, \sigma, \tau)  \ll F^{7/8} GH + FGH^{3/4} \,. 
\]
\item For $GH < p \log p$,
\[
T(\cF, \cG, \cH ;  \rho, \sigma, \tau)  \les p^{1/8} F^{7/8} G^{7/8} H^{7/8} +FGH^{3/4} \,.
\]
\end{itemize}
\end{theorem}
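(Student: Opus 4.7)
My plan is to follow the general strategy that underlies Theorem~\ref{thm:STsums}: reduce the trilinear sum $T(\cF,\cG,\cH;\rho,\sigma,\tau)$ to a combinatorial quantity of $D^{\times}$ type, and then insert the subgroup bound supplied by Corollary~\ref{cor:Dgroup}. The novelty compared with the generic proof will lie in keeping the $\cG$ and $\cH$ variables coupled throughout, so that the energy which finally appears is the \emph{mixed} quantity $\widetilde{D}^{\times}(\cG,\cH)$ rather than a separated product of $D^{\times}(\cG)$ and $D^{\times}(\cH)$.

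The first step is to revisit the reduction in~\cite[Theorem~1.3]{PetShp} quoted in the proof of Theorem~\ref{thm:STsums}. I would repeat the Cauchy-Schwarz chain while keeping the $(\cG,\cH)$ coordinates tied together, in order to arrive at the refined inequality
\[
T(\cF,\cG,\cH;\rho,\sigma,\tau) \ll p^{1/8} F^{7/8} G^{1/2} H^{1/2} \widetilde{D}^{\times}(\cG,\cH)^{1/8} + FGH^{3/4}.
\]
The gap $D^{\times}-\widetilde{D}^{\times} \ll G^6+H^6$ is easily absorbed into the additive error $FGH^{3/4}$, so it makes no difference whether one works with $D^{\times}$ or $\widetilde{D}^{\times}$ at this step.

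The second step is routine: insert the two cases of Corollary~\ref{cor:Dgroup}. For $GH\ge p\log p$, the bound $\widetilde{D}^{\times}(\cG,\cH)\ll G^4H^4/p$ makes the first term of the displayed inequality equal $p^{1/8}F^{7/8}G^{1/2}H^{1/2}(G^4H^4/p)^{1/8}=F^{7/8}GH$, which is the first claim. For $GH<p\log p$, the bound $\widetilde{D}^{\times}(\cG,\cH)\les G^3H^3$ gives $\les p^{1/8}F^{7/8}G^{7/8}H^{7/8}$, the second claim.

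The main technical obstacle is confirming the refined inequality of the first step. The naive approach, substituting directly into the separated form $p^{1/8}F^{7/8}G^{1/2}H^{1/2}D^{\times}(\cG)^{1/16}D^{\times}(\cH)^{1/16}$ of Theorem~\ref{thm:STsums} and invoking Corollary~\ref{cor:Dgroup} with $\cG=\cH$ individually for each factor, would fail in the intermediate regime $G^2\ge p\log p>H^2$: there the two individual losses do not combine to the correct $G^4H^4/p$, and one ends up with a bound weaker than claimed by a factor $p^{1/16}H^{-1/8}$. The mixed form is therefore essential, and ensuring that the $\cG$-$\cH$ coupling survives the whole Cauchy-Schwarz chain is where the real work lies.
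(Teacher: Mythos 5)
Your proposal is correct and is essentially the paper's own argument: the paper simply quotes the coupled inequality $T(\cF, \cG, \cH ;  \rho, \sigma, \tau)  \ll p^{1/8} F^{7/8} G^{1/2} H^{1/2} \widetilde D^\times(\cG, \cH)^{1/8}+ FGH^{3/4}$ as already established in the proof of~\cite[Theorem~1.3]{PetShp} and then substitutes Corollary~\ref{cor:Dgroup}, exactly as in your second step. The ``main technical obstacle'' you flag---re-running the Cauchy--Schwarz chain so that the $\cG$--$\cH$ coupling survives---requires no new work, since the coupled form is precisely what~\cite{PetShp} proves (the separated bound with $D^\times(\cY)^{1/16}D^\times(\cZ)^{1/16}$ quoted in the proof of Theorem~\ref{thm:STsums} is obtained from it by a further Cauchy--Schwarz), though your observation that the separated form would lose a factor in the regime $G^2\ge p\log p>H^2$ correctly explains why the coupled version must be used here.
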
 

\begin{proof}
Again, we recall that it is shown in~\cite{PetShp} that 
\[
T(\cF, \cG, \cH ;  \rho, \sigma, \tau)  \ll p^{1/8} F^{7/8} G^{1/2} H^{1/2} \widetilde D^\times(\cG, \cH)^{1/8}+ FGH^{3/4} \,.
\]
Using Corollary~\ref{cor:Dgroup} we prove the result.
\end{proof}

The first bound of Theorem~\ref{thm:Tgroup} is non-trivial when $F \to \infty$ as $p \to \infty $, while the second bound is non-trivial when $FHG \ge p^{1+\varepsilon}$ for some fixed $\varepsilon >0$. The first part of the theorem improves~\cite[Lemma~3.5]{MSS} in the range $H < p^{1/2}$, and $GH \geq p \log p$ and leads to improved bounds on trinomial exponential sums in some ranges. The bound in~\cite[Lemma~3.5]{MSS} in the range $H < (p \log p)^{1/2} < G \leq F$ is
\[
T(\cF, \cG, \cH; \rho, \sigma, \tau)  \les p^{1/16} F^{7/8} G H^{7/8}
\]
and
\[
\frac{F^{7/8} GH}{p^{1/16} F^{7/8} G H^{7/8}} = \left( \frac{H^2}{p}\right)^{1/16} \,.
\]

As in~\cite[Theorem~1.5]{MSS},  we can use Theorem~\ref{thm:Tgroup} to  estimate exponential sums
 \begin{equation}
\label{eq:TrinomSum}
S_\chi(\Psi) = \sum_{x\in \F_p^*} \chi(x) \ep(\Psi(x)), 
 \end{equation} 
 with  trinomials 
 \begin{equation}
\label{eq:Trinom} 
\Psi(X) =aX^{k}+bX^\ell + cX^m, 
 \end{equation}
 where $\chi$ is an arbitrary multiplicative character of $\F_p^*$.

\begin{cor}
\label{cor:Bound3}   
Let $\Psi(X)$ be a trinomial of the form~\eqref{eq:Trinom} 
with $a,b,c  \in \F_p^*$.  
Define
\[d= \gcd(k,p-1), \qquad e = \gcd(\ell,p-1), \qquad  f =  \gcd(m,p-1)
\]
and
\[
g =\frac{d}{\gcd(d,f)},\qquad h =\frac{e}{\gcd(e,f)}.
\]
Suppose 
$f \ge g\ge h$, then  for sum~\eqref{eq:TrinomSum} we have
\[
S_\chi(\Psi) \ll  \left\{
\begin{array}{ll}
p^{7/8}f^{1/8}, & \text{if $gh \ge p  \log p$},\\
p(f/gh)^{1/8}\(\log p\)^{1/8}, & \text{if $gh < p  \log p$}. 
\end{array}
\right.
\]
\end{cor}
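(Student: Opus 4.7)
The plan is to follow, essentially verbatim, the proof of \cite[Theorem~1.5]{MSS}, substituting our new subgroup bound Theorem~\ref{thm:Tgroup} for the weaker bound \cite[Lemma~3.5]{MSS} that is invoked there.

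In outline, the argument in \cite{MSS} reduces the trinomial character sum $S_\chi(\Psi)$ to a trilinear exponential sum over three multiplicative subgroups $\cF, \cG, \cH \subseteq \F_p^*$ of orders $f$, $g$, $h$ respectively. The particular choice $g = d/\gcd(d,f)$ and $h = e/\gcd(e,f)$ enters because, after exploiting the invariance $\xi^m = 1$ for $\xi \in \cF$ (which holds as $f \mid m$), the contributions of the monomials $aX^k$ and $bX^\ell$ depend on $X$ only modulo subgroups of those respective orders. A change of variables $x \mapsto \xi\eta\zeta x$ with $(\xi,\eta,\zeta) \in \cF \times \cG \times \cH$, combined with a bounded use of the Cauchy--Schwarz inequality, expresses $S_\chi(\Psi)$ up to an explicit normalisation in $p, f, g, h$ as a sum of the form $T(\cF, \cG, \cH; \rho, \sigma, \tau)$ with weights of modulus at most one.

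With the reduction in place, the remainder is a direct application of Theorem~\ref{thm:Tgroup} with $F = f$, $G = g$, $H = h$. In the range $gh \ge p \log p$, the first part of Theorem~\ref{thm:Tgroup} delivers, upon reinserting the normalisation, the main term $p^{7/8} f^{1/8}$; in the range $gh < p \log p$, the second part delivers $p(f/gh)^{1/8} (\log p)^{1/8}$, where the logarithmic factor traces back through Corollary~\ref{cor:Dgroup} and Theorem~\ref{thm:subgroups} to the $\log G$ appearing there. In both regimes, the secondary terms $F G H^{3/4}$ of Theorem~\ref{thm:Tgroup} are dominated under the hypothesis $f \ge g \ge h$.

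The principal point of substance, and the only place where one must be careful, is verifying that the \cite{MSS} reduction identifies the trilinear subgroup orders precisely as $(f,g,h)$ --- rather than the naive $(d,e,f)$ --- which is what makes passing through the quotients by $\cF \cap \cD$ and $\cF \cap \cE$ essential. Since this bookkeeping is already discharged in \cite[Section~3]{MSS}, and the only change in our proof is to substitute Theorem~\ref{thm:Tgroup} at the final step, no new technical obstacle arises.
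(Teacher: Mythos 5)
Your proposal follows the same route as the paper: rerun the reduction from the proof of \cite[Theorem~1.5]{MSS} and substitute Theorem~\ref{thm:Tgroup} for \cite[Lemma~3.5]{MSS}, then check that the secondary term is absorbed. The only point you gloss is that absorbing the term coming from $FGH^{3/4}$ (which becomes $ph^{-1/4}$) in the range $gh\ge p\log p$ uses not just $f\ge g\ge h$ but also the regime condition itself (via $h^2f\ge hp\log p>p$), a short verification the paper carries out explicitly.
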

\begin{proof}
As in the proof of \cite[Theorem~1.5]{MSS}, adopting our new bound from Theorem \ref{thm:Tgroup}, for $gh > p\log p$ we have
\begin{align*}
S_\chi(\Psi) \ll p^{7/8}f^{1/8} + ph^{-1/4}.
\end{align*}
The first term clearly dominates when $h^2f > p$. Now,
\begin{align*}
h^2f \ge \frac{h f p \log p}{g} > hp \log p
\end{align*}
since $f \ge g$. Hence the first term dominates everywhere in this range.

For $gh < p \log p$ we have
\begin{align*}
S_\chi(\Psi) \ll p(f/gh)^{1/8}\(\log p\)^{1/8} + ph^{-1/4}.
\end{align*}
The first term clearly dominates when $f h \log^8 p > g$. Since $f\ge g$, the first term dominates everywhere in this range. This gives the required result.
\end{proof}

In particular, Corollary~\ref{cor:Bound3}    improves~\cite[Theorem~1.5]{MSS} for 
$ g \ge \(p  \log p\)^{1/2}>h$

Theorem~\ref{thm:D_improved_new} also improves 
the existing lower bound on $|(\cA-\cA)(\cA-\cA)|$ when $A \leq p^{1/2}$.

\begin{theorem} 
	\label{thm:ProdDiff}
For any set $\cA \subseteq \F_p$ of size $A \leq p^{1/2}$ we have
\[
|(\cA-\cA)(\cA-\cA)| \ges A^{20/13}
\]
\end{theorem}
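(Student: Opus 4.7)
The plan is to prove Theorem~\ref{thm:ProdDiff} via a standard Cauchy--Schwarz bound relating a product set size to its representation function, and then apply the new estimate $D^\times(\cA) \les A^{84/13}$ from Theorem~\ref{thm:D_improved_new}.

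More concretely, set $\cP = (\cA - \cA)(\cA - \cA)$ and define the representation function
\[
r(x) = |\{ (a_1, a_2, a_3, a_4) \in \cA^4 :~ (a_1 - a_2)(a_3 - a_4) = x \}|.
\]
Then by definition $r(x)$ is supported on $\cP$, and $\sum_{x \in \cP} r(x) = A^4$. Moreover, the $\ell^2$ norm of $r$ is exactly $D^\times(\cA)$:
\[
\sum_{x \in \cP} r(x)^2 = D^\times(\cA).
\]

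Next, I would apply the Cauchy--Schwarz inequality to get
\[
A^8 = \Bigl(\sum_{x \in \cP} r(x)\Bigr)^2 \le |\cP| \sum_{x \in \cP} r(x)^2 = |\cP| \cdot D^\times(\cA),
\]
which gives $|\cP| \ge A^8 / D^\times(\cA)$. Substituting the bound $D^\times(\cA) \les A^{84/13}$ from Theorem~\ref{thm:D_improved_new}, valid since $A \le p^{1/2}$, we obtain
\[
|(\cA-\cA)(\cA-\cA)| \ges A^{8 - 84/13} = A^{20/13},
\]
as required.

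There is no real obstacle here, as all the hard work is packaged into Theorem~\ref{thm:D_improved_new}. The only thing to verify is the standard identity relating $D^\times(\cA)$ to the second moment of $r$, which follows immediately from the definition of $D^\times(\cA)$ (counting ordered $8$-tuples $(a_1,\ldots,a_8) \in \cA^8$ with $(a_1-a_2)(a_3-a_4) = (a_5-a_6)(a_7-a_8)$).
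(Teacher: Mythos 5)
Your proposal is correct and is essentially identical to the paper's own argument: the paper also deduces $|(\cA-\cA)(\cA-\cA)| \geq A^8/D^\times(\cA)$ by Cauchy--Schwarz and then inserts the bound $D^\times(\cA) \les A^{84/13}$ from Theorem~\ref{thm:D_improved_new}. Your write-up simply makes the representation-function bookkeeping explicit, which the paper leaves implicit.
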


\begin{proof}
By an application of the Cauchy-Schwarz inequality we get
\[
|(\cA-\cA)(\cA-\cA) | \geq \frac{A^8}{D^\times(\cA)} \ges A^{20/13}.
\]
\end{proof}

This improves the exponent $68/45 - \varepsilon$ for all $\varepsilon >0$ and $A \leq p^{9/16}$ give in~\cite[Theorem~27]{MPR-NRS}.

\section{Acknowledgement}

During the preparation of this work, S.M. was supported by  the Australian Government Research Training Program Scholarship,   
G.P.   by the NSF Award 1723016 (he also gratefully acknowledges the support from the RTG in Algebraic Geometry, Algebra, and Number Theory at the University of Georgia),
 I.D.S. by the  of the Russian Government  Grant N~075-15-2019-1926 and I.E.S. by Australian Research Council  Grant DP170100786.

\end{document}